\theoremstyle{definition}
\newtheorem{teo}{Theorem}[]
\theoremstyle{definition}
\theoremstyle{definition}
\newtheorem{lem}[teo]{Lemma}
\theoremstyle{definition}
\theoremstyle{definition}
\theoremstyle{remark}
\newtheorem{rem}{Remark}
\def\R{\mathbb{R}}
\def\C{\mathbb{C}}
\def\K{\mathbb{K}}
\def\P{\mathbb{P}}
\def\Q{\mathbb{Q}}
\def\L{\mathbb{L}}
\begin{document}

\title{Birationally trivial real smooth cubic surfaces}

\author{Jon Gonz\'alez-S\'anchez} 
\address{Departmento de
  Matem\'aticas, Estad\'{\i}stica y Computaci\'on, Facultad de
  Ciencias, Universidad de Cantabria, Avda. de los Castros, E-39071
  Santander, Spain }

\email{jon.gonzalez@unican.es}

\author{Irene Polo Blanco} 
\address{Departmento de
  Matem\'aticas, Estad\'{\i}stica y Computaci\'on, Facultad de
  Ciencias, Universidad de Cantabria, Avda. de los Castros, E-39071
  Santander, Spain }

\email{irene.polo@unican.es}

  \thanks{The first author acknowledges support by the 
Spanish Ministerio de Ciencia e Innovaci\'on, grant MTM2008-06680-C02-01.}

\begin{abstract}
Smooth real cubic surfaces are birationally trivial (over $\R$) if and only if their real locus is connected or, equivalently, if and only if
they have two skew real lines or two skew complex conjugate lines. In such a case a parametrization over the reals 
can be given by cubic polynomials. In this short note we provide a simple geometric method to obtain such parametrization 
based in an algorithm by I. Polo-Blanco and J. Top \cite{Po-Top}.
\end{abstract}

\keywords{Cubic surfaces,  parametrization}
\subjclass{14J26, 14Q10, 65D17}

\maketitle

\section*{Introduction}
A cubic surface $S$ is the vanishing set of a homogeneous polynomial $f$ of degree $3$   
in $\P^3$, i.e.,
\begin{equation*}
S=\{ (x:y:z:t)\in \P^3\mid f(x:y:z:w)=0\}. 
\end{equation*}
By a classical result of Clebsch \cite{Cl} we know that a smooth cubic surface 
over the complex numbers 
admits a parametrization by cubic polynomials defined  over $\C$ (see Theorem \ref{clebsch} below). 
If $S$ is a cubic surface defined over the real numbers, then $S$ admits a parametrization 
defined over the real numbers if and only if the real locus of $S$ is connected 
or, equivalently, if and only if $S$ has two disjoint real lines or two disjoint complex conjugate lines 
(see Theorem \ref{real} below).

Algorithms for parametrizing smooth cubic surfaces have shown to be of great interest in 
Computer Aided Geometric Design  since these surfaces are much  more flexible 
than quadrics and the resolution of many problems in science and technology depends on 
using the so called $A$--cubic patches \cite{Baj0} (bounded and nonsingular cubic surfaces in $\R^3$) to 
approximate certain objects (such as molecules, organs of the human body, etc.). Some applications of 
these particular cubic surfaces ($A$--cubic patches) to molecular 
modeling can be found, for example, in \cite{Baj2,Baj3}. 
Some of the known algorithms  for parametrizing cubic surfaces can be found in the works of 
T. W. Sederberg and J. P.  Snively \cite{Sed}, C. Bajaj, R. Holt and A. Netravali \cite{Baj}, 
T. G. Berry and  R. R. Patterson \cite{Ber} or I. Polo-Blanco and J. Top \cite{Po-Top}.

In this note we parametrize smooth cubic surfaces by giving a purely geometric and 
very simple construction based on the algorithm 
presented in \cite{Po-Top}. The main advantage of this new approach is that  it is easy 
to understand and easy to program, as we will show at the end of this 
note. This a is a small, but we believe important contribution to the theory of real cubic surfaces, that 
might  interest the mathematical community in general and in particular the CAGD researchers.
The construction presented here is, from the computational point of view, optimal. 
It requires the computation of the lines on a smooth cubic surface, a problem recently 
solved by R. Pannenkoek in his master thesis (compare \cite[\S 3.1]{Po-Top} and \cite{Pa}).
However it does not require a different construction for the cases where the cubic surface has two disjoint real lines or 
two disjoint complex conjugate lines, as the previous algorithms did (see \cite{Baj}, \cite{Ber}, \cite{Po-Top}).
We have implemented the algorithm in the mathematical software MAPLE and applied it to the Fermat cubic surface 
and to a generic smooth cubic surface (see Section \ref{last}).
\section{Cubic surfaces}



In 1949 A. Cayley and G. Salmon discovered that smooth cubic surfaces over $\C$ 
have precisely $27$ lines over $\C$. The $27$ lines in a smooth cubic surface have a very special symmetry and 
configuration  and whole books have been written to describe them (e.g. \cite{He}, \cite{Se}). 
L. Schlafli introduced the concept of a \textit{double six}  in order to describe the 
intersection behaviour 
of the $27$ lines in a smooth cubic surface. A double six is a set of 12 of
the 27 lines on a cubic surface $S$, represented with Schl\"afli's
notation as:
$$\left(
\begin{array}{cccccc}
a_1 & a_2 & a_3& a_4& a_5& a_6\\
b_1 & b_2 & b_3& b_4& b_5& b_6
\end{array}\right),$$
with the following intersection behaviour: $a_i$ (resp. $b_i$) does not intersect 
$a_j$ (resp. $b_j$) for $i \neq j$ and $a_i$ intersects
$b_j$ if and only if $i \neq j$. The plane spanned by two intersecting lines $a_i$ and 
$b_j$ for $i \neq j$ intersects the cubic surface on a third line $c_{ij}$. 
Repeating this process the remaining $15$ lines on $S$ are obtained together with their  intersecting behaviour. 

The following property regarding the lines on $S$ will be needed in what follows, and can be easily verified 
by using the configuration of the $27$ lines on $S$.

\begin{lem}\label{combinatoria}
Let $S$ be a nonsingular cubic surface and $\ell_1$ and $\ell_2$ are two skew lines on $S$. 
Then there are precisely $5$ lines that intersect both $\ell_1$ and $\ell_2$. 
\end{lem}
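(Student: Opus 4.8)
The plan is to work entirely with Schl\"afli's explicit incidence structure of the $27$ lines and to exploit the symmetry of that configuration in order to reduce to a single, convenient pair of skew lines. Recall that every line on $S$ is one of the $a_i$, one of the $b_i$, or one of the $c_{ij}$ (with $i\neq j$), and that their intersection behaviour is completely determined: the $a_i$ are pairwise skew, the $b_i$ are pairwise skew, $a_i$ meets $b_j$ exactly when $i\neq j$, and $c_{ij}$ (the residual line in the plane through $a_i$ and $b_j$) meets $a_k$ exactly when $k\in\{i,j\}$, and likewise meets $b_k$ exactly when $k\in\{i,j\}$. Since the symmetry group of the configuration of the $27$ lines (the Weyl group $W(E_6)$) acts transitively on the pairs of skew lines, it is enough to prove the statement for one representative pair; I would take $\ell_1=a_1$ and $\ell_2=a_2$, which are skew because the $a_i$ are pairwise skew.

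With this normalization the count is immediate, and I would organize it family by family. No line $a_k$ qualifies, since the $a_i$ are pairwise skew and hence none of them meets $a_1$, let alone both $a_1$ and $a_2$. A line $b_k$ meets $a_1$ precisely when $k\neq 1$ and meets $a_2$ precisely when $k\neq 2$, so it meets both exactly when $k\in\{3,4,5,6\}$; this yields the four lines $b_3,b_4,b_5,b_6$. Finally $c_{ij}$ meets $a_1$ precisely when $1\in\{i,j\}$ and meets $a_2$ precisely when $2\in\{i,j\}$, so it meets both exactly when $\{i,j\}=\{1,2\}$, giving the single line $c_{12}$. Altogether there are exactly $4+1=5$ lines meeting both $\ell_1$ and $\ell_2$, namely $b_3,b_4,b_5,b_6$ and $c_{12}$.

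The only step that is not pure bookkeeping is the reduction to the representative pair, i.e.\ the assertion that all pairs of skew lines lie in a single orbit of the symmetry group of the $27$ lines (see, e.g., \cite{He}); this is where I would be careful. If one prefers to avoid invoking transitivity, the same conclusion follows by checking the finitely many combinatorial types of a line skew to $a_1$: up to relabelling, a second line skew to $a_1$ is either another $a_j$, the complementary line $b_1$, or some $c_{kl}$ with $1\notin\{k,l\}$, and in each of these cases the identical bookkeeping produces exactly five common transversals. Equivalently, one may pass to the realization of $S$ as the blow-up of $\P^2$ at six points, in which the $a_i$, $c_{ij}$ and $b_i$ become the exceptional curves, the strict transforms of the lines through pairs of the points, and the conics through five of the points; the relevant intersection numbers are then read off from $H^2=1$, $E_i^2=-1$ and $H\cdot E_i=E_i\cdot E_j=0$ (for $i\neq j$), and the same five lines reappear.
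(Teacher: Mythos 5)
Your proof is correct, and it is more than the paper itself offers: the paper's ``proof'' of this lemma is simply a citation to \cite[Lemma 1.2]{Po-Top}, while the surrounding text only remarks that the claim ``can be easily verified by using the configuration of the $27$ lines on $S$.'' What you have written is precisely that verification, carried out in full. Your incidence table for the $a_i$, $b_i$, $c_{ij}$ is the standard (and correct) one, and the count for the pair $(a_1,a_2)$ --- the four lines $b_3,b_4,b_5,b_6$ plus $c_{12}$ --- is right. The one step you rightly flag as delicate, transitivity of the symmetry group $W(E_6)$ on pairs of skew lines, is a true but nontrivial fact; your fallback of checking the three combinatorial types of a line skew to $a_1$ (namely $a_j$ with $j\neq 1$, the line $b_1$, and $c_{kl}$ with $1\notin\{k,l\}$) is the cleaner route, since it avoids quoting any group theory, and each of those three cases does indeed yield exactly five common transversals ($b_3,\dots,b_6$ and $c_{12}$; the five lines $c_{1l}$; and, e.g.\ for $c_{23}$, the lines $b_2,b_3,c_{14},c_{15},c_{16}$). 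Your final remark, translating everything into intersection numbers on the blow-up of $\P^2$ at six points, is also correct and is essentially the computation one finds in the cited reference; so your write-up has the advantage of being self-contained where the paper defers to the literature, at the cost only of length.
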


\begin{proof}
See \cite[Lemma 1.2]{Po-Top}
\end{proof}

Another important result concerning smooth cubic surfaces was obtained by 
A. Clebsch in \cite{Cl}.

\begin{teo}[Clebsch]
\label{clebsch}
Let $S\subseteq \P^3(\C )$ be a nonsingular cubic surface. Then $S$ is obtained by mapping 
$\P^2$ to $\P^3$ by the space of cubic forms passing through $6$ points in general position 
(i.e. no three on a line and no six on a conic).
\end{teo}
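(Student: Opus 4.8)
The plan is to realize $S$ as the blow-up of $\P^2$ at six points in general position and then to identify the embedding $S\hookrightarrow\P^3$ with the map attached to the cubics through those six points. First I would extract six pairwise skew lines from the configuration of the $27$ lines: the six lines $a_1,\dots,a_6$ of a double six are mutually disjoint by the intersection rules recalled above. By adjunction for a hypersurface one has $K_S=(K_{\P^3}+S)|_S=-H|_S$, where $H$ is the hyperplane class; hence for any line $\ell\subset S$ we get $K_S\cdot\ell=-1$, and since $\ell\cong\P^1$, adjunction forces $\ell^2=-1$. Thus each $a_i$ is a $(-1)$-curve, and the six being pairwise disjoint, I would invoke Castelnuovo's contractibility criterion to contract them simultaneously by a birational morphism $\pi\colon S\to X$ onto a smooth projective surface $X$.

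Next I would identify $X$ with $\P^2$. Contracting six disjoint $(-1)$-curves gives $K_X^2=K_S^2+6=3+6=9$ and drops the second Betti number to $b_2(X)=7-6=1$. The invariants $p_g$ and $q$ vanish for $S$ (as $K_S=-H$ has no sections) and are birational, so $X$ is a smooth rational surface with Picard number one; the only such surface is $\P^2$. Writing $H=\pi^*(\ell)$ for a line $\ell\subset\P^2$ and letting $E_i$ denote the class of $a_i$, I obtain $\mathrm{Pic}(S)=\Z H\oplus\bigoplus_{i=1}^6\Z E_i$ with $H^2=1$, $H\cdot E_i=0$ and $E_i\cdot E_j=-\delta_{ij}$; comparing canonical classes through $\pi$ then yields $-K_S=3H-\sum_{i=1}^6 E_i$. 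Since $-K_S$ is the hyperplane class, the embedding $S\hookrightarrow\P^3$ is exactly the morphism associated to $|3H-\sum_{i=1}^6 E_i|$, that is, to the cubics of $\P^2$ vanishing at the six points $p_i=\pi(a_i)$.

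Finally I would check that the six points lie in general position. If three of them, say $p_1,p_2,p_3$, lay on a line, the strict transform of that line would be an effective class $H-E_1-E_2-E_3$ with $(-K_S)\cdot(H-E_1-E_2-E_3)=3-3=0$; but the hyperplane class is ample on the embedded surface and so has positive degree on every effective curve, a contradiction. The same computation with $2H-\sum_{i=1}^6 E_i$, where $(-K_S)\cdot(2H-\sum E_i)=6-6=0$, rules out six points on a conic. Hence no three of the $p_i$ are collinear and no six lie on a conic, which is the required general position.

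I expect the main obstacle to be the identification $X\cong\P^2$. Producing the six skew lines and contracting them is essentially formal once the line configuration and adjunction are in place, but certifying that the contracted surface is the projective plane, rather than some other rational surface, is where genuine input is required: either the classification of minimal rational surfaces (to conclude that Picard number one forces $\P^2$) or a direct computation of $h^0$ of the relevant linear systems. The remaining steps are then numerical verifications inside $\mathrm{Pic}(S)$.
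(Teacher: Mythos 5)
The paper offers no proof of its own here---its ``proof'' is simply the citation to Beauville, Proposition IV.12---so your proposal should be measured against the standard argument, which is essentially what you reproduce: extract six pairwise skew lines from a double six, show by adjunction ($K_S=-H|_S$, hence $K_S\cdot\ell=-1$ and $\ell^2=-1$) that they are $(-1)$-curves, contract them, and identify the base of the contraction. Your lattice computations, the identification $-K_S=3H-\sum_{i}E_i$, and the use of ampleness of the hyperplane class to exclude three collinear points or six points on a conic are all correct. (One small point you elide: to conclude that the embedding is given by the \emph{complete} system of cubics through the six points, you need $h^0(S,\mathcal{O}_S(1))=4$, i.e.\ that the hyperplane system on $S$ is complete; this follows from the cohomology of $0\to\mathcal{O}_{\P^3}(-2)\to\mathcal{O}_{\P^3}(1)\to\mathcal{O}_S(1)\to 0$.)

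There is, however, one genuine gap, located exactly where you predicted the crux would be: the identification $X\cong\P^2$. You infer that $X$ is rational from $p_g=q=0$. That implication is false: $p_g=q=0$ does not characterize rational surfaces (this is precisely the classical failure that Castelnuovo's criterion repairs). Enriques surfaces satisfy it, and---more damagingly for your argument---so do fake projective planes, which moreover have Picard number $1$ and $K^2=9$ and are of general type. Thus \emph{every} invariant you computed for $X$ ($p_g=q=0$, $b_2=\rho=1$, $K_X^2=9$) is shared by a surface that is not $\P^2$, so the numerics alone cannot close the argument; nor may you appeal to the classification of minimal \emph{rational} surfaces before rationality is established (the rationality of $S$ is, after all, part of what is being proved). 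The repair is cheap and standard: $P_2(S)=h^0(S,2K_S)=h^0(S,\mathcal{O}_S(-2))=0$ since $\mathcal{O}_S(-2)$ is the restriction of the dual of an ample bundle, and plurigenera are birational invariants, so $P_2(X)=0$; Castelnuovo's rationality criterion ($q=P_2=0$) then shows $X$ is rational. Picard number $1$ forces $X$ to be minimal, and among minimal rational surfaces ($\P^2$ and the Hirzebruch surfaces $\F_n$, $n\neq 1$, the latter having $b_2=2$) only $\P^2$ has $b_2=1$. With that one sentence added, your proof is complete.
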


\begin{proof}
See \cite[Proposition IV.12]{Be}.
\end{proof}



A base change is just a projective transformation of the cubic surface.

\section{Real cubic surfaces}

If $S$ is a real cubic surface then it is not true in general that the morphism 
in Theorem \ref{clebsch} can be defined over the reals. 
For example, the cubic surface defined by the following equation is not 
parametrizable since its real locus is not connected (see \cite{Po-Top1}),
\begin{equation}
X^3 + Y^3 + Z^3 + X^2Y + X^2Z + XY^2 + Y^2Z - 6Y^2W + 11ZW^2 - 6W^3 = 0.
\end{equation}
One has the following characterization of birationally trivial real cubic surfaces.

\begin{teo} \label{real} 
Let $S$ be a smooth cubic surface defined over $\R$. Then the following conditions are equivalent
\begin{enumerate}
\item The real locus $S (\R )$ is connected.
\item $S$ has a set of two skew lines defined over $\R$ (i.e. either both real or complex conjugate).
\item $S$ is birationally trivial over $\R$ (i.e. $S$ is parametrizable over $\R$).
\item $S$ is the blow up of $\P^2$ at six points defined over $\R$ (i.e. $S$ admits a parametrization by cubic polynomials).
\end{enumerate}
\end{teo}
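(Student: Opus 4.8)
The plan is to establish the four equivalences by proving the cycle $(4)\Rightarrow(3)\Rightarrow(1)\Rightarrow(2)\Rightarrow(4)$. Throughout I would encode the real structure as the action of $G=\mathrm{Gal}(\C/\R)=\{1,\sigma\}$ on the base change $S_{\C}=S\times_{\R}\C$, on its $27$ lines, and on $\mathrm{Pic}(S_{\C})$. In this language ``defined over $\R$'' for a finite set of lines (or of points) means exactly ``stable under $\sigma$''. Thus condition $(2)$ reads: there is a $\sigma$-stable pair of skew lines (either two $\sigma$-fixed lines, or a pair interchanged by $\sigma$); and condition $(4)$ reads: $S_{\C}$ carries a $\sigma$-stable set of six mutually skew lines (a $\sigma$-stable \emph{sixer}), whose contraction is then automatically defined over $\R$.

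For $(4)\Rightarrow(3)$ I would run Clebsch's theorem (Theorem \ref{clebsch}) over $\R$. If $S$ is the blow-up of $\P^2$ at a $\sigma$-stable set of six points $\{p_1,\dots,p_6\}$ in general position, the inverse of the blow-down is given by the linear system of plane cubics through these six points. Since $\{p_1,\dots,p_6\}$ is $\sigma$-stable, this linear system is defined over $\R$, so it yields a morphism $\P^2\to S\subset\P^3$ with real coefficients, i.e. a parametrization of $S$ by cubic polynomials over $\R$; this is $(3)$ together with the last clause of $(4)$. For $(3)\Rightarrow(1)$ I would use the factorization of birational maps of smooth projective surfaces: if $S$ is $\R$-birational to $\P^2$, there is a smooth projective real surface $Z$ with birational morphisms to both $S$ and $\P^2$, each a composition of blow-ups at closed points. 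A closed point here is either a real point or a conjugate pair; blowing up a real point performs a connected sum of the real locus with $\P^2(\R)$, while blowing up a conjugate pair leaves the real locus unchanged. Since $\P^2(\R)$ is connected, so is $Z(\R)$; and contracting a real $(-1)$-curve (whose real points form a circle) or a conjugate pair of $(-1)$-curves keeps the real locus connected, whence $S(\R)$ is connected.

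The implication $(2)\Rightarrow(4)$ I would settle by a short parity argument. A direct count on the configuration of the $27$ lines shows that every skew pair lies in exactly five sixers: there are $72$ sixers, each containing $\binom{6}{2}=15$ skew pairs, against $216$ skew pairs in total, giving $72\cdot 15/216=5$. Now if $\{\ell_1,\ell_2\}$ is a $\sigma$-stable skew pair, then $\sigma$ permutes the five sixers through it; being an involution acting on a set of odd cardinality, it must fix one of them. That fixed sixer is $\sigma$-stable, so its contraction $S\to\P^2$ is defined over $\R$, and the images of its six lines form a $\sigma$-stable set of six points. Here I would also note that a smooth real cubic always has real points, so the contracted degree-$9$ del Pezzo surface is genuinely $\P^2$ over $\R$ and not a pointless Severi--Brauer surface; this gives exactly $(4)$.

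The main obstacle is $(1)\Rightarrow(2)$, the only step that is not a direct geometric construction, since it must convert the topological datum ``$S(\R)$ connected'' into the existence of a $\sigma$-stable skew pair. Here I would invoke the Schl\"afli--Klein classification of smooth real cubic surfaces into five families $F_1,\dots,F_5$ according to the conjugacy class of $\sigma$ in the automorphism group $W(E_6)$ of the line configuration (equivalently, according to the number $27,15,7,3,3$ of real lines), in which $S(\R)$ is connected precisely for $F_1,\dots,F_4$ and disconnected (homeomorphic to the disjoint union of $\P^2(\R)$ and a sphere) for $F_5$. It then remains to read off the $\sigma$-action on the lines in each family and to check that $F_1,\dots,F_4$ each carry a $\sigma$-stable skew pair while $F_5$ carries none; equivalently, and perhaps more cleanly, one argues the contrapositive, showing that the absence of a $\sigma$-stable skew pair forces $\sigma$ into the conjugacy class of $F_5$ and hence $S(\R)$ to be disconnected. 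This verification, resting on the explicit classification of the real forms, is the genuine content of the theorem, the remaining implications being comparatively formal.
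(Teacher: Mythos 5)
Your argument is essentially correct, but it cannot be compared step-by-step with the paper's, because the paper does not actually prove Theorem \ref{real}: its proof consists of the single sentence ``Compare \cite{Si}, \cite{Ko}, \cite{KM} and \cite{Po-Top1}''. Measured against that, your write-up is strictly more informative. Three of your four implications are genuine self-contained proofs: $(4)\Rightarrow(3)$ by Galois descent of the linear system of cubics through a $\sigma$-stable six-point set; $(3)\Rightarrow(1)$ via factorization of real birational maps into blow-ups at closed points (real centers give connected sums with $\P^2(\R)$, conjugate pairs change nothing topologically, and the resulting morphisms are surjective on real loci); and $(2)\Rightarrow(4)$ by the parity count $72\cdot 15/216=5$ of sixers through a given skew pair, which forces the involution $\sigma$ to fix one of the five, together with the observation that $S(\R)\neq\emptyset$ rules out a nontrivial Severi--Brauer target for the contraction. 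That parity trick is in fact the same one the paper itself uses in Section \ref{s:construction} via Lemma \ref{combinatoria} (at least one of the five lines meeting both $\ell_1$ and $\ell_2$ is real), transplanted from lines to sixers, and it is the most valuable part of your proposal. Where you and the paper coincide is $(1)\Rightarrow(2)$: both ultimately rest on the Schl\"afli--Klein classification of the five real forms of a smooth cubic surface, which is the unavoidable input converting the topology of $S(\R)$ into data about the $\sigma$-action on the $27$ lines, and which you at least reduce to an explicit finite verification rather than citing wholesale. Two small corrections there: the five families are \emph{not} distinguished by the number of real lines alone (the fourth and fifth both have exactly $3$ real lines; it is the conjugacy class of $\sigma$ in $W(E_6)$ that separates them --- in $F_4$ the three real lines pairwise meet but the exceptional curves form conjugate skew pairs); and you only need to exhibit a $\sigma$-stable skew pair in $F_1,\dots,F_4$, since the non-existence of such a pair in $F_5$, which you propose to verify separately, already follows from your proved chain $(2)\Rightarrow(4)\Rightarrow(3)\Rightarrow(1)$ and the disconnectedness of $F_5(\R)$.
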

 
\begin{proof}
Compare \cite{Si}, \cite{Ko}, \cite{KM} and \cite{Po-Top1}.
\end{proof}

\section{The parametrization}
\label{s:construction}

The geometric construction we propose works as follows. Let $S$ be a smooth cubic surface defined over $\C$ 
and $\ell_1$ and $\ell_2$ two skew lines on $S$ (i.e. $\ell_1\cap \ell_2=\emptyset$). Consider $m$ a line on $S$ that intersects both 
$\ell_1$ and $\ell_2$ and $H$ a plane containing $m$ that does not contain $\ell_1$ or $\ell_2$. Then for any point $x\in H$ 
there exists a unique line $\ell_x$ passing through $x$ and intersecting both $\ell_1$ and $\ell_2$. The line $\ell_x$ intersects 
the smooth cubic surface at three points:  namely $\ell_x\cap \ell_1$, $\ell_x\cap \ell_2$, and a third point that we will denote by  $q_x$. Now, the parametrization is given by the map
\begin{eqnarray*}
\Phi :H\cong \P^2\to S\subseteq \P^3
\end{eqnarray*}
defined by $\Phi (x)=q_x$.

A real smooth cubic surface $S$ admits a parametrization defined over $\R$ if and only if there exists a 
pair of skew lines on $S$ (call them $\ell_1$ and $\ell_2$) both real or complex conjugated (see Theorem \ref{real}). 
Let us show that, for such $S$, the parametrization $\Phi$ defined above can be constructed over the real numbers. 
Indeed, there are precisely five lines on $S$ intersecting both $\ell_1$ and $\ell_2$ (see Proposition \ref{combinatoria}) 
where at least one of them, call it $m$, is real. This implies that the plane $H$ can be chosen to be real, and therefore, the map $\Phi$ 
will be defined over $\R$.

We continue with the proof that the construction proposed above gives a parametrization of $S$ by cubic polynomials. 
The proof is purely algebraic and it is not needed for the application of the geometric construction to concrete examples. 

\begin{teo}
\label{construction}
Following the above notation, the map $\Phi :\P^2\to S\subseteq \P^3$ is a birational morphism given by cubic polynomials.
\end{teo}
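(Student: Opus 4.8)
The plan is to pick coordinates adapted to the two skew lines, parametrize the transversals by $\P^1\times\P^1$, and then transport the construction to the plane $H$. First I would choose homogeneous coordinates $(x_0:x_1:x_2:x_3)$ on $\P^3$, over the field of definition, so that $\ell_1=\{x_2=x_3=0\}$ and $\ell_2=\{x_0=x_1=0\}$. The transversal through $(s:t:0:0)\in\ell_1$ and $(0:0:u:v)\in\ell_2$ is then $\{(\lambda s:\lambda t:\mu u:\mu v)\}$, and a general point $(a:b:c:d)$ lies on the unique transversal indexed by $((a:b),(c:d))\in\P^1\times\P^1$. Restricting $f$ to such a transversal yields a binary cubic
\[
f(\lambda s,\lambda t,\mu u,\mu v)=A\lambda^3+B\lambda^2\mu+C\lambda\mu^2+D\mu^3,
\]
and since $\ell_1,\ell_2\subseteq S$ the coefficients $A=f(s,t,0,0)$ and $D=f(0,0,u,v)$ vanish identically. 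Hence the residual root is $(\lambda:\mu)=(-C:B)$, giving the third point
\[
q=(Cs:Ct:-Bu:-Bv),
\]
where $B$ is bihomogeneous of bidegree $(2,1)$ and $C$ of bidegree $(1,2)$ in $((s,t),(u,v))$. Each coordinate of $q$ then has bidegree $(2,2)$, so we obtain a rational map $\psi:\P^1\times\P^1\dashrightarrow S$.

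Next I would realize $\Phi$ as $\psi$ precomposed with the projection from $H$. Using the $GL_2\times GL_2$ that stabilizes $\ell_1,\ell_2$ I may assume $m=\{x_1=x_3=0\}$ and take $H=\{x_1=x_3\}$, with coordinates $(p:q:r)$ defined by $(x_0:x_1:x_2:x_3)=(p:r:q:r)$. The rule $x\mapsto\ell_x$ becomes $\sigma:H\to\P^1\times\P^1$, $(p:q:r)\mapsto((p:r),(q:r))$, a map given by linear forms. Substituting the representatives $(s,t,u,v)=(p,r,q,r)$ into the formula for $q$ expresses $\Phi=\psi\circ\sigma$ through the four forms
\[
\bigl(C\,p:\ C\,r:\ -B\,q:\ -B\,r\bigr),\qquad B=B(p,r,q,r),\ C=C(p,r,q,r),
\]
each homogeneous of degree $4$ in $(p:q:r)$; the bidegrees guarantee that this point of $\P^3$ does not depend on the chosen representatives.

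The heart of the argument, and the step I expect to be the main obstacle, is to show that these four quartics share the common linear factor $r$ (the equation of $m$ in $H$), so that cancelling it leaves cubics. Geometrically, $\sigma$ contracts $m=\{r=0\}$ to the single point $P_0=((1:0),(1:0))$, and because $m$ lies on $S$ the cubic $f$ vanishes along the whole transversal $P_0$; thus $B(1,0,1,0)=C(1,0,1,0)=0$, i.e. $P_0$ is one of the five base points of $\psi$ predicted by Lemma \ref{combinatoria}. Consequently each of the four forms above is a $(2,2)$-form evaluated along $\sigma$, hence is constantly equal along $m$ to its value at $P_0$, which is $0$; so all four vanish identically on $\{r=0\}$, forcing $r\mid B(p,r,q,r)$ and $r\mid C(p,r,q,r)$. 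Writing $B=r\hat B$, $C=r\hat C$ with $\hat B,\hat C$ of degree $2$ and cancelling $r$, the map $\Phi$ is given by the four cubics $(\hat C p:\hat C r:-\hat B q:-\hat B r)$. I would then check that $\hat B$ and $\hat C$ share no further common factor, so that the degree is exactly $3$ rather than lower.

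Finally I would prove birationality by exhibiting the inverse. A general point $q\in S$ lies on a unique transversal $\ell_q$ of $\ell_1,\ell_2$, and $x=\ell_q\cap H$ is the unique point of $H$ with $\Phi(x)=q$; the assignment $q\mapsto\ell_q\cap H$ is rational and inverse to $\Phi$, whence $\Phi$ is birational onto $S$. Since every cubic parametrization of a cubic surface arises from a linear system with base points, $\Phi$ is a morphism on the dense open complement of its base locus, which is what is meant here by a birational morphism given by cubic polynomials. All coordinate choices are available over the field of definition, so the statement holds over $\R$ whenever $\ell_1,\ell_2$ and a suitable $m$ are real.
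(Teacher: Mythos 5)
Your proposal is correct in substance, and although it rests on the same factorization as the paper --- the correspondence $x\mapsto(\ell_x\cap\ell_1,\ell_x\cap\ell_2)$ through $\ell_1\times\ell_2\cong\P^1\times\P^1$ --- the way you reach the conclusion is genuinely different. The paper argues birationally: $\Phi_1$ blows up the two points $\ell_i\cap H$ and contracts $m$, $\Phi_2^{-1}$ contracts the five transversal lines (among them $m$ again), so the composite $\Phi$ is the blow-up of $\P^2$ at six points and is therefore given by the cubics through those points. You instead prove the degree statement by direct computation in adapted coordinates: the residual-intersection map $\psi$ has bidegree $(2,2)$ because the extreme coefficients $A,D$ of the binary cubic vanish (this is exactly where $\ell_1,\ell_2\subset S$ enters), composing with the linear projection $\sigma$ gives quartics, and $m\subset S$ forces the common factor $r$, leaving cubics; birationality comes from the explicit inverse $q\mapsto\ell_q\cap H$. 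Your route is more elementary (no blow-up formalism, no appeal to Clebsch-type results) and is essentially a rigorous version of the MAPLE computation in Section \ref{last}; what the paper's argument buys instead is the precise identification of the six base points, i.e.\ the fact that the defining cubics are exactly those through six points, which your computation does not exhibit.

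Two points you leave open should be closed. The important one is the claim that $\hat B$ and $\hat C$ have no common factor; this is not automatic and needs both Lemma \ref{combinatoria} and smoothness. First note that an irreducible common factor $g$ of the four cubics must divide both $\hat B$ and $\hat C$ (since $\gcd(p,r)=\gcd(q,r)=1$). If $g\neq r$, then $\sigma(\{g=0\})$ is a curve in $\P^1\times\P^1$ (as $\sigma$ is injective off $\{r=0\}$) along which $B=C=0$; every point of $\{B=C=0\}$ is a transversal entirely contained in $S$, so this would give infinitely many lines on $S$ meeting both $\ell_1$ and $\ell_2$, contradicting the count of five in Lemma \ref{combinatoria}. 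If $g=r$, write out the monomials: with $\alpha_2$ the coefficient of $x_0^2x_3$ in $f$, one has that $\alpha_2$ is the coefficient of $p^2$ in $\hat B$, and the gradient of $f$ at $m\cap\ell_1=(1{:}0{:}0{:}0)$ equals $(0,0,\alpha_1,\alpha_2)$ with $\alpha_1=0$ forced by $m\subset S$; smoothness of $S$ at that point therefore gives $\alpha_2\neq 0$, so $r\nmid\hat B$, and symmetrically smoothness at $m\cap\ell_2$ gives $r\nmid\hat C$. The second, minor, point is your normalization $H=\{x_1=x_3\}$: it is harmless, but deserves the remark that the stabilizer of $(\ell_1,\ell_2,m)$ acts transitively on the planes of the pencil through $m$ other than $\{x_1=0\}\supset\ell_1$ and $\{x_3=0\}\supset\ell_2$. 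Finally, your closing sentence is not literally correct when $\ell_1,\ell_2$ are complex conjugate: your coordinates then exist only after base change to $\C$. This does not affect the theorem --- degree and birationality can be checked after extension of scalars, and the paper establishes separately that $\Phi$ itself is defined over $\R$ --- but the assertion that all coordinate choices are available over the field of definition should be dropped or qualified.
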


\begin{proof}
The morphism $\Phi$ factor as follows:
$$\xymatrix{H\ar@{-->}[r]^{\Phi_1\ \ \ }&\ell_1\times \ell_2\ar@{-->}^{\ \ \ \Phi_2}[r]&S,}$$
where $\Phi_1(x)=(\ell_x\cap \ell_1,\ell_x\cap \ell_2)\in \ell_1\times \ell_2$ and
$\Phi_2(\ell_x\cap \ell_1,\ell_x\cap \ell_2)=q_x$. The morphism $\Phi_1$ blows up the points 
$\ell_1\cap H$ and $\ell_2\cap H$ and blows down the line $m$. The morphism
$\Phi_2^{-1}$ blows down the five lines that intersect both $\ell_1$ and $\ell_2$, 
in particular the line $m$. Therefore $\Phi=\Phi_2\circ \Phi_1$ blows up $H$ at six points. 
Hence $\Phi$ is given by cubic polynomials.
\end{proof}

\begin{rem}
In the last section of this text we give a computational proof of this fact using the 
program MAPLE. 
\end{rem}

\begin{rem}
\begin{itemize}
\item[(a)] If both lines $\ell_1$ and $\ell_2$ are  defined over $\R$ and one considers 
the map $\Phi_2 :\ell_1\times \ell_2\cong \P^1\times \P^1\longrightarrow S$ in the proof of the previous theorem,
then $\Phi_2$ is a parametrization of $S$ by biquadratic polynomials. In fact $\Phi_2$ is defined by the space of bihomogeneus 
forms of bidegree $(2,2)$ passing through $5$ points $q_1, \ldots ,q_5$ in general possition in $\P^1\times \P^1$. 
This is the algorithm proposed in 
\cite{Sed} and \cite{Baj} with a slight modification for the case when $S$ contains two complex conjugate disjoint lines.
The five points $q_1, \ldots ,q_5$ are the base points of the parametrization.
\item[(b)] If the plane $H$ in the construction above does not contain a line in $S$, then the map $\Phi$ is given by 
polynomials of degree $5$. In fact, in such a  case $\Phi$ is defined by the space of forms of degree $5$ vanishing with multipicity two 
at five points $q_1, \ldots ,q_5$ and with multiplicity one at two points $p_1$ and $p_2$ such that $q_1,\ldots ,q_5$, $p_1$ and $p_2$ 
are in general possition. The seven points $ q_1, \ldots ,q_5, p_1,p_2$ are the base points of the parametrization. 
\end{itemize}
\end{rem}

\section{Examples}
\label{last}

We will now apply the construction given in the previous section to 
some concrete examples. We start with the Fermat cubic.

\subsection{Fermat cubic}

The Fermat cubic surface $S$ in 
$\R^3$ is defined by $1+x_1^3+x_2^3+x_3^3=0$. Following the notation of the previous section 
we take $\ell_1=(-\omega^2,-\omega t,t)$ and 
$\ell_2=(-\omega , -\omega^2 t,t)$ where  $\omega$ is a primitive third root of unity 
\cite[\S 5.2]{Po-Top}. The line $m$ is $(-t,-1,t)$ and we choose the plane $H$ to be  $x_2=-1$.
Now the algorithm that parametrizes  $S$ can be easily implemented in a computer algebra program 
(e.g. in MAPLE).
\begin{quotation}
\sffamily
\noindent
\# Select the Fermat cubic: \\
\noindent
f:=(x1,x2,x3)$\to$ 1+x1\^{}3+x2\^{}3+x3\^{}3: \\
\noindent
w:=RootOf(x\^{}2+x+1,x): \\
\noindent
\# Choose the lines $\ell_1$ and $\ell_2$ on $S$: \\
\noindent
line1 :=r $\to$ [-w\^{}2, -w*r, r]: \\
\noindent
line2 := s $\to$ [-w, -w\^{}2*s, s]:  \\
\noindent
\# The line $\ell_x$: \\
\noindent
linex := t $\to$ [y1,-1,y2]+[t, t*b,t*c]:  \\
\noindent
Sol := solve($\{$line1(r)[1] = linex(t1)[1], line1(r)[2] = linex(t1)[2], line1(r)[3] = linex(t1)[3], line2(s)[1] = linex(t2)[1], line2(s)[2] = linex(t2)[2], line2(s)[3] = linex(t2)[3], f(linex(t3)[1], linex(t3)[2], linex(t3)[3]) = 0, t1 $<>$ t3, t2 $<>$ t3$\}$, [r, s, b, c, t1, t2, t3]): \\
\noindent 
\# The pametrization of $S$ is \\
\noindent
parametrization := factor(subs(Sol[1], linex(t3)));
\end{quotation}
This program gives the following parametrization of $S$:
\begin{eqnarray*}
x_1(y_1,y_2)&=&-\frac{y_1^3-2y_1^2-y_1^2y_2+3y_1+2y_1y_2+y_2^2y_1+y_2^2}{y_1^3-2y_1^2-y_1^2y_2+3y_1+2y_1y_2+y_2^2y_1-3-3y_2-2y_2^2}, \\
x_2(y_1,y_2)&=& \frac{2y_1^2+y_1^2y_2-y_2^2y_1-2y_1y_2-3y_1+3+3y_2+y_2^3+2y_2^2}{y_1^3-2y_1^2-y_1^2y_2+3y_1+2y_1y_2+y_2^2y_1-3-3y_2-2y_2^2}, \\
x_3(y_1,y_2)&=& -\frac{y_1^2y_2-y_1^2-2y_1y_2-y_2^2y_1+y_2^3+2y_2^2+3y_2}{y_1^3-2y_1^2-y_1^2y_2+3y_1+2y_1y_2+y_2^2y_1-3-3y_2-2y_2^2}.
\end{eqnarray*}

\subsection{A general construction with MAPLE}

Following Section \ref{s:construction} we consider a triple 
$(\ell_1,\ell_2,m)$ where $\ell_1$, $\ell_2$ and $m$ are three lines in $\P^3$ such that 
$\ell_1$ and $\ell_2$ are skew and $m$ intersects both $\ell_1$ and $\ell_2$. 
We say that a cubic surface admits a triple $(\ell_1,\ell_2,m)$ 
if the three lines $\ell_1$, $\ell_2$, $m$ are contained in $S$.
If $\K$ is a subfield of $\C$ we say that the triple $(\ell_1,\ell_2,m)$ is 
defined over $\K$ if $\ell_1\cup \ell_2\cup m$  is defined over $\K$, i.e. $m$ is defined over $\K$ and 
$\ell_1$ and $\ell_2$ are conjugate on a quadratic extension $\L$ of $\K$.

For our purposes it is enough to consider the case $m=(t,0,0)$, $\ell_1=(a+b_1t,b_2t,b_3t)$ and $\ell_2=(c+d_1t,d_2t,d_3t)$ 
(which is the general case 
up to a projective transformation). First we compute the space $V$ of  cubic surfaces admitting the triple 
$(\ell_1,\ell_2,m)$. If the triple $(\ell_1,\ell_2,m)$ is defined over $\K$ so will the space $V$. 
We can easily compute the space $V$ .

\begin{quotation}
\sffamily
\noindent 
with(PolynomialTools):
\noindent
\# Consider a Generic cubic: \\
\noindent
f:=(x1,x2,x3)$\to$A*x1\^{}3+B*x2\^{}3+C*x3\^{}3+D*x1\^{}2*x2+E*x1\^{}2*x3+F*x2\^{}2*x1\\
+G*x2\^{}2*x3+H*x3\^{}2* x1+  J*x3\^{}2*x2+K*x1*x2*x3+L*x1\^{}2+M*x2\^{}2+N*x3\^{}2+\\ 
O*x1*x2+P*x1*x3+Q*x2*x3+R*x1+S*x2+T*x3+U;
\\
\noindent
\# Choose two disjoint lines $\ell_1$ and $\ell_2$: \\
\noindent
line1:=t$\to$[a+b1*t,b2*t,b3*t];\\
\noindent
line2:=t$\to$[c+d1*t,d2*t,d3*t]; \\
\noindent
\# Choose a line m that intersect both $\ell_1$ and $\ell_2$: \\
\noindent
linem:=t$\to$[t,0,0]; \\
\noindent
\# Compute the Coefficient Vector of $\ell_1$, $\ell_2$ and $m$: \\
\noindent
E1:=expand(subs({x1=line1(t)[1],x2=line1(t)[2],x3=line1(t)[3]},f(x1,x2,x3))); \\
\noindent
E2:=expand(subs({x1=line2(t)[1],x2=line2(t)[2],x3=line2(t)[3]},f(x1,x2,x3))); \\
\noindent
Em:=expand(subs({x1=linem(t)[1],x2=linem(t)[2],x3=linem(t)[3]},f(x1,x2,x3))); \\
\noindent
V1:=CoefficientVector(collect(E1,t),t); \\
\noindent
V2:=CoefficientVector(collect(E2,t),t); \\
\noindent
Vm:=CoefficientVector(collect(Em,t),t); \\
\noindent
\# Compute the variety of the cubics passing through the lines  $\ell_1$, $\ell_2$ and $m$. \\
\noindent 
Sols:=solve({V1[1]=0,V1[2]=0,V1[3]=0,V1[4]=0,V2[1]=0,V2[2]=0,V2[3]=0,V2[4]=0, \\
Vm[1]=0, Vm[2]=0,Vm[3]=0,Vm[4]=0},{A,B,C,D,E,F,G,H,J,K,L,M,N,O,P,Q,R,S,T,U}): \\
\noindent
Cubic:=subs(Sols,f(x1,x2,x3));
\end{quotation}

This program gives a description of the space of cubics passing through the lines $\ell_1$, $\ell_2$ and $m$. 
One can proceed by computing a parametrization of a general cubic in $V$  with the method proposed in 
Section \ref{s:construction} as follows.

\begin{quotation}
\sffamily
\noindent
\# Consider the plane H=(y1,y1+y2,1-y2) \\
\noindent
linex := t $\to$ [y1,y2,0]+[t, n*t, m*t];
\\
\noindent
\# Solve\\
\noindent
Sol := solve({line1(r)[1] = linex(t1)[1], line1(r)[2] = linex(t1)[2], line1(r)[3] = linex(t1)[3], line2(s)[1] = linex(t2)[1],  
line2(s)[2] = linex(t2)[2], line2(s)[3] = linex(t2)[3], subs({x1=linex(t3)[1], x2=linex(t3)[2], x3=linex(t3)[3]}, Cubic)=0, 
t1$<>$t3,t2$<>$t3}, [r, s, n, m, t1, t2, t3]):\\
\noindent
\# Now the parametrization is \\
\noindent
parametrization := factor(subs(Sol[1], linex(t3))): \\
\end{quotation}

This gives a parametrization by cubic polynomials (in $y_1$ and $y_2$) as it is claimed in Theorem \ref{construction}.

\begin{eqnarray*}
x_1(y_1,y_2)&=&-\frac{-2 T a^2 b_2^3 d_3^2 d_2 y_1^2-T a^2 b_2^2 c b_1  d_2 d_3^2y_2-3 T a^2 b_2^2 c d_3 b_3 d_2^2 y_1+\ldots }
{2 C a b_2 c d_3^3 b_1 b_3 y_2^3+2 K a^2 b_2 c b_3^2  d_2^3 y_1y_2-2 K a^2 b_2 c  d_2^2 b_3^2 d_1y_2^2+\ldots }, \\
x_2(y_1,y_2)&=& \frac{2 d_2 b_2^2 c T b_1 d_3^2 y_1 y_2^2-2 d_2 b_2^2  a c d_3^2 P b_1 y_1y_2^2-2 d_2 b_2^2  a T b_1  d_3^2y_1y_2^2+\ldots }
{2 C a b_2 c d_3^3 b_1 b_3 y_2^3+2 K a^2 b_2 c b_3^2  d_2^3 y_1y_2-2 K a^2 b_2 c  d_2^2 b_3^2 d_1y_2^2+\ldots }, \\
x_3(y_1,y_2)&=& -\frac{ b_3 d_3 a b_2^2 c^3 D d_1 d_2y_2^2-3  b_3 d_3 a^2 b_2^2 c^2 D d_1 d_2y_2^2+2 b_3 d_3 a b_2^2 c^3 D d_2^2 y_1y_2 +\ldots}
{2 C a b_2 c d_3^3 b_1 b_3 y_2^3+2 K a^2 b_2 c b_3^2  d_2^3 y_1y_2-2 K a^2 b_2 c  d_2^2 b_3^2 d_1y_2^2+\ldots }, \\
\end{eqnarray*}

\end{document}